\begin{document}

\newtheorem{theorem}[subsection]{Theorem}
\newtheorem{proposition}[subsection]{Proposition}
\newtheorem{lemma}[subsection]{Lemma}
\newtheorem{corollary}[subsection]{Corollary}
\newtheorem{conjecture}[subsection]{Conjecture}
\newtheorem{prop}[subsection]{Proposition}
\newtheorem{defin}[subsection]{Definition}

\numberwithin{equation}{section}
\newcommand{\mr}{\ensuremath{\mathbb R}}
\newcommand{\mc}{\ensuremath{\mathbb C}}
\newcommand{\dif}{\mathrm{d}}
\newcommand{\intz}{\mathbb{Z}}
\newcommand{\ratq}{\mathbb{Q}}
\newcommand{\natn}{\mathbb{N}}
\newcommand{\comc}{\mathbb{C}}
\newcommand{\rear}{\mathbb{R}}
\newcommand{\prip}{\mathbb{P}}
\newcommand{\uph}{\mathbb{H}}
\newcommand{\fief}{\mathbb{F}}
\newcommand{\majorarc}{\mathfrak{M}}
\newcommand{\minorarc}{\mathfrak{m}}
\newcommand{\sings}{\mathfrak{S}}
\newcommand{\fA}{\ensuremath{\mathfrak A}}
\newcommand{\mn}{\ensuremath{\mathbb N}}
\newcommand{\mq}{\ensuremath{\mathbb Q}}
\newcommand{\half}{\tfrac{1}{2}}
\newcommand{\f}{f\times \chi}
\newcommand{\summ}{\mathop{{\sum}^{\star}}}
\newcommand{\chiq}{\chi \bmod q}
\newcommand{\chidb}{\chi \bmod db}
\newcommand{\chid}{\chi \bmod d}
\newcommand{\sym}{\text{sym}^2}
\newcommand{\hhalf}{\tfrac{1}{2}}
\newcommand{\sumstar}{\sideset{}{^*}\sum}
\newcommand{\sumprime}{\sideset{}{'}\sum}
\newcommand{\sumprimeprime}{\sideset{}{''}\sum}
\newcommand{\sumflat}{\sideset{}{^\flat}\sum}
\newcommand{\shortmod}{\ensuremath{\negthickspace \negthickspace \negthickspace \pmod}}
\newcommand{\V}{V\left(\frac{nm}{q^2}\right)}
\newcommand{\sumi}{\mathop{{\sum}^{\dagger}}}
\newcommand{\mz}{\ensuremath{\mathbb Z}}
\newcommand{\leg}[2]{\left(\frac{#1}{#2}\right)}
\newcommand{\muK}{\mu_{\omega}}
\newcommand{\thalf}{\tfrac12}
\newcommand{\lp}{\left(}
\newcommand{\rp}{\right)}
\newcommand{\Lam}{\Lambda_{[i]}}
\newcommand{\lam}{\lambda}
\newcommand{\af}{\mathfrak{a}}
\newcommand{\sw}{S_{[i]}(X,Y;\Phi,\Psi)}
\newcommand{\lz}{\left(}
\newcommand{\pz}{\right)}
\newcommand{\bfrac}[2]{\lz\frac{#1}{#2}\pz}
\newcommand{\odd}{\mathrm{\ primary}}
\newcommand{\even}{\text{ even}}
\newcommand{\res}{\mathrm{Res}}

\theoremstyle{plain}
\newtheorem{conj}{Conjecture}
\newtheorem{remark}[subsection]{Remark}

\makeatletter
\def\widebreve{\mathpalette\wide@breve}
\def\wide@breve#1#2{\sbox\z@{$#1#2$}%
     \mathop{\vbox{\m@th\ialign{##\crcr
\kern0.08em\brevefill#1{0.8\wd\z@}\crcr\noalign{\nointerlineskip}%
                    $\hss#1#2\hss$\crcr}}}\limits}
\def\brevefill#1#2{$\m@th\sbox\tw@{$#1($}%
  \hss\resizebox{#2}{\wd\tw@}{\rotatebox[origin=c]{90}{\upshape(}}\hss$}
\makeatletter

\title[First moment of central values of primitive Dirichlet $L$-functions with fixed order characters]{First moment of central values of some primitive Dirichlet $L$-functions with fixed order characters}


\author[P. Gao]{Peng Gao}
\address{School of Mathematical Sciences, Beihang University, Beijing 100191, China}
\email{penggao@buaa.edu.cn}

\author[L. Zhao]{Liangyi Zhao}
\address{School of Mathematics and Statistics, University of New South Wales, Sydney, NSW 2052, Australia}
\email{l.zhao@unsw.edu.au}

\begin{abstract}
 We evaluate asymptotically the smoothed first moment of central values of families of primitive cubic, quartic and sextic Dirichlet $L$-functions, using the method of double Dirichlet series.  Quantitative non-vanishing result for these $L$-values are also proved.
\end{abstract}

\maketitle

\noindent {\bf Mathematics Subject Classification (2010)}: 11M06, 11M41, 11N37, 11L05, 11L40    \newline

\noindent {\bf Keywords}:  central values, Dirichlet $L$-functions, Gauss sums, moments of $L$-functions

\section{Introduction}
\label{sec 1}

   Moments of central values of $L$-functions attached to characters of a fixed order have attracted increasing attention in the literature as they have many interesting arithmetic applications. Although there are many results concerning quadratic Dirichlet $L$-functions (see \cites{Jutila, DoHo, MPY, ViTa, sound1, DGH, Young1}), fewer results are available for those of higher orders.  In \cite{B&Y}, S. Baier and M. P. Young evaluated the first moment of the family of cubic Dirichlet $L$-functions at the central point. Their result was extended to the quartic case in \cite{G&Zhao7} assuming the truth of the Lindel\"of hypothesis. \newline

  While the {\it modus operandi} of establishing asymptotic formulas for the first moment in \cites{B&Y, G&Zhao7} utilizes classical tools such as approximate functional equations, B. Brubaker \cite{Brubaker} applied the powerful tool of multiple Dirichlet series to study the second moment of central values of the cubic family of Dirichlet $L$-functions. To achieve the result, certain correction factors are attached to the triple Dirichlet series under consideration in \cite{Brubaker} in order to develop sufficient functional equations to obtain analytical continuation of the series to the entire $\mc^3$. \newline

 It is the aim of this paper to investigate the first moments of central values of the families of cubic, quartic and sextic Dirichlet $L$-functions  using the method of multiple Dirichlet series. Our treatment differs from that in \cite{Brubaker} as we do not seek to establish analytical continuation of the double Dirichlet series involved in our proof to the whole of $\mc^2$, but rather to a region that is large enough for our purpose. The advantage of doing so is that altering the underlying series becomes necessary so that our approach is more concise and direct. \newline

Let $\Phi(x)$ be a fixed non-negative, smooth function compactly supported on the set of positive real numbers ${\mr}_+$. We also write $\widehat f$ for the Mellin transform of a function $f$. Further let  $\chi_0$ be the principal Dirichlet character. Our main result is as follows.
\begin{theorem}
\label{firstmoment}
  With the notation as above and assuming the truth of the generalized Lindel\"of hypothesis, let $j=3, 4$ or $6$.  We have, for $1/2>\Re(\alpha) \geq 0$, all large $Q$ and any $\varepsilon>0$,
\begin{equation}
\label{eq:1}
 \sum_{(q,j)=1}\;  \sumstar_{\substack{\chi \bmod{q} \\ \chi^j = \chi_0}} L(\half+\alpha, \chi) \Phi \leg{q}{Q} = C_j Q \widehat{\Phi}(0) + O((2+|\alpha|)^{3(j-1)+\varepsilon}Q^{\frac {2j+1-2j\alpha}{2j+2}+ \varepsilon}),
\end{equation}
where $C_j$ is a positive constant explicitly given in \eqref{eq:c} and $\sum^*$ over $\chi$ means the sum runs over primitive characters $\chi$ such that $\chi^i, 1 \leq i \leq j-1$ remains primitive.
\end{theorem}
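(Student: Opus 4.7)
My plan is to use the method of double Dirichlet series. Set $s=\tfrac12+\alpha$. Applying Mellin inversion to the weight $\Phi(q/Q)$, the left-hand side of \eqref{eq:1} may be written as
\begin{equation*}
   \frac{1}{2\pi i}\int_{(c)}\widehat\Phi(w)\,Q^w\,Z(s,w)\,\dif w,\qquad Z(s,w):=\sum_{(q,j)=1}\frac{1}{q^w}\sumstar_{\substack{\chi \bmod q\\ \chi^j=\chi_0}} L(s,\chi),
\end{equation*}
with $c$ chosen so that everything converges absolutely. Thus the asymptotic evaluation in \eqref{eq:1} reduces to locating the poles of $Z(s,w)$ and controlling its growth on vertical lines as one shifts the $w$-contour to the left.

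\textbf{Structure of $Z(s,w)$.} Expanding $L(s,\chi)=\sum_{n\geq 1}\chi(n)n^{-s}$ in a half-plane of absolute convergence and interchanging the order of summation, I would parametrize the primitive characters of exact order $j$ via residue symbols: cubic symbols in $\mz[\omega]$ for $j=3$, quartic symbols in $\mz[i]$ for $j=4$, and a product of the two for $j=6$. A M\"obius inversion removes the primitivity requirement on each power $\chi^i$, $1\leq i\leq j-1$. The resulting double sum over $q$ and $\chi$ reassembles into a Dirichlet series in the residue symbol, so that $Z(s,w)$ is essentially a weighted average over $n$ of Hecke $L$-functions $L_{\mq(\zeta_j)}(w,\widebreve\chi_n)$ attached to residue symbols with ``upper'' entry $n$, multiplied by suitable local factors.

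\textbf{Continuation and bounds.} The central analytic step is to continue $Z(\tfrac12+\alpha,w)$ past $\Re(w)=1$ into the half-plane $\Re(w)>\tfrac{2j+1-2j\Re(\alpha)}{2j+2}$, while controlling its size in both $|w|$ and $|\alpha|$. As emphasised in the introduction, I would not pursue continuation to all of $\mc^2$ as in \cite{Brubaker}, but only as far as is needed for the target error term. The key tool is a functional equation for $Z(s,w)$ coming from Poisson summation on the residue symbol (equivalently, from cubic, quartic or sextic reciprocity), which interchanges $w$ with $1-w$ up to manageable arithmetic factors. Combined with convexity bounds for the Hecke $L$-functions $L_{\mq(\zeta_j)}(\cdot,\widebreve\chi_n)$, upgraded to Lindel\"of strength under the generalized Lindel\"of hypothesis, it yields an estimate of the shape $Z(\tfrac12+\alpha,w)\ll_\varepsilon (1+|w|)^{A}(2+|\alpha|)^{3(j-1)+\varepsilon}$ in the required strip.

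\textbf{Conclusion and main obstacle.} The only pole of $Z(\tfrac12+\alpha,w)$ in this region is a simple pole at $w=1$, arising from the $n=1$ term where the residue-symbol $L$-function degenerates to a Dedekind zeta factor. Shifting the contour from $\Re(w)=c$ down to $\Re(w)=\tfrac{2j+1-2j\Re(\alpha)}{2j+2}+\varepsilon$ and collecting the residue produces the main term $C_jQ\widehat\Phi(0)$, with $C_j$ given by an explicit residue computation; the shifted integral, bounded via the above growth estimate and the rapid decay of $\widehat\Phi$ on vertical lines, delivers the error term. The principal obstacle I anticipate is establishing the analytic continuation of $Z(s,w)$ with the required polynomial control in $|w|$ and $|\alpha|$: this demands a careful inclusion--exclusion to isolate genuinely primitive characters, a clean treatment of coprimality conditions, and, in the sextic case, simultaneously tracking the cubic and quartic structures, since a primitive sextic character decomposes uniquely as a product of a primitive cubic and a primitive quartic character on coprime moduli.
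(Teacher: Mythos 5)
Your overall architecture --- forming the double Dirichlet series, Mellin inversion in the modulus variable, shifting the contour, and extracting the main term from the residue at the pole coming from the modulus variable --- is exactly the paper's strategy (the paper's $A_j(s,w)$ is your $Z(w,s)$ with the roles of the two variables swapped). However, there are two genuine problems with the plan. First, your treatment of the sextic case is wrong: a primitive character of order $6$ does not decompose as a product of a primitive cubic and a primitive quartic character --- such a product generically has order $12$; the correct decomposition is cubic times quadratic. The paper avoids any such decomposition altogether by working directly with sextic residue symbols in $\mq(\sqrt{-3})=\mq(\omega)$, whose unit group contains the primitive sixth roots of unity, at the cost of introducing a second normalization of generators ($E$-primary elements) and the sextic reciprocity law \eqref{quadreciQw}.

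Second, and more seriously, the analytic heart of the proof is missing. After applying the functional equation of $L(\tfrac12+\alpha,\chi)$ --- which converts $L(w,\chi)$ into $\tau(\chi)L(1-w,\overline{\chi})$ and hence, via Lemma \ref{quarticGausssum1}, into $j$-th order Gauss sums $g_K(\chi_{j,n})$ --- one is left with Dirichlet series of the shape $\sum_n g_K(r,\chi_{j,n})N(n)^{-s}$ over the modulus variable. For $j\geq 3$ these are not controlled by ``Poisson summation on the residue symbol'' or reciprocity alone: their meromorphic continuation and polynomial growth (Lemma \ref{lem1}, taken from Patterson's work on metaplectic forms) is a deep external input, and it forces an additional pole at $s+w=1+1/j$ in the paper's coordinates, i.e.\ at $w=\tfrac12+\tfrac1j-\alpha$ in yours, which your plan does not account for and which must be shown to contribute only to the error term. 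Moreover, the continuation to the final region is obtained not from a single functional equation but by gluing three separate regions of absolute convergence via Bochner's tube theorem and by propagating the growth estimates to the convex hull via Proposition \ref{Extending inequalities}; without some such device the claimed bound $(2+|\alpha|)^{3(j-1)+\varepsilon}(1+|w|)^A$ in the intermediate strip is unjustified. Finally, a smaller correction: the pole producing the main term arises from all terms of $L(\tfrac12+\alpha,\chi)=\sum_m\chi(m)m^{-1/2-\alpha}$ with $m$ a perfect $j$-th power, not only the first term; this is why $C_j$ in \eqref{eq:c} carries the extra Dirichlet-series factor $Z_j(\tfrac12+\alpha)$.
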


  As mentioned earlier, the proof of Theorem \ref{firstmoment} uses the method of multiple Dirichlet series. Moreover, we make use of Lemma \ref{lemma:quarticclass} to relate Dirichlet characters of order $3,4$ and $6$ to Hecke characters of corresponding orders in quadratic number fields $K=\mq(i)$ or $\mq(\sqrt{-3})$. The work of S. J. Patterson \cite{P} on Gauss sums associated to higher order Hecke characters also plays an important role. \newline

  We also point out here that we need to assume the truth of the Lindel\"of hypothesis in our proof to control the size of certain $L$-values on average. In fact, this is only used to control the size of the right-hand side of \eqref{Aswboundsumm}. One may alternatively estimate the same expression using \cite[(39)]{B&Y} by noting that \cite[(39)]{B&Y} continues to hold with $1/2+it$ there being replaced by any $s$ with $1/2 <\Re(s) <1$. This and partial summation then leads to an error term in \eqref{eq:1} of size $O(Q^{37/38 + \varepsilon})$ unconditionally, which matches the result given in \cite[Theorem 1.1]{B&Y}.  However, as this is inferior to the error term $O(Q^{13/14 + \varepsilon})$ given in an earlier version ({\tt arXiv:0804.2233v1}) of \cite{B&Y}, we shall not get into the details here. \newline

  Similar to the proof of \cite[Corollary 1.2]{B&Y}, the well-known bound for the eighth moment of Dirichlet $L$-functions and the lower bound implied by the asymptotic formula in Theorem~\ref{firstmoment} yield, via H\"older's inequality, the following non-vanishing result on the central values of Dirichlet $L$-functions under our consideration.
\begin{corollary} \label{coro:nonvanish}
Assume that the generalized Lindel\"of hypothesis is true.  There exist infinitely many primitive Dirichlet characters $\chi$ of order $j=3,4$ or $6$ such that $L(1/2, \chi) \neq 0$.  More precisely, the number of such characters with conductor $\leq Q$ is $\gg Q^{6/7- \varepsilon}$.
\end{corollary}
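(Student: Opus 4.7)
The strategy is to combine the first-moment lower bound from Theorem \ref{firstmoment} with the standard eighth moment bound via Hölder's inequality, mimicking \cite[Corollary 1.2]{B&Y}. Specializing Theorem \ref{firstmoment} to $\alpha=0$ and noting that $(2j+1)/(2j+2) < 1$ for $j\in\{3,4,6\}$ yields
\[
\sum_{(q,j)=1}\;\sumstar_{\substack{\chi \bmod{q} \\ \chi^j=\chi_0}} L(\half,\chi)\,\Phi\!\leg{q}{Q} = C_j Q\,\widehat{\Phi}(0) + o(Q).
\]
Since $\overline\chi$ belongs to the same family as $\chi$ and has the same conductor, the left-hand side is real-valued; choosing $\Phi$ nonnegative with $\widehat{\Phi}(0)>0$ and using $C_j>0$ then gives, via the triangle inequality,
\[
\sum_{(q,j)=1}\;\sumstar_{\substack{\chi \bmod{q} \\ \chi^j=\chi_0}} |L(\half,\chi)|\,\Phi\!\leg{q}{Q} \;\gg\; Q
\]
for all sufficiently large $Q$.

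Write $\mathcal{N}(Q)$ for the number of primitive Dirichlet characters of order $j$ and conductor $\ll Q$ with $L(\half,\chi)\neq 0$. Restricting the preceding sum to such $\chi$ and applying Hölder's inequality with exponents $8/7$ and $8$ gives
\[
Q \;\ll\; \mathcal{N}(Q)^{7/8}\left(\sum_{(q,j)=1}\;\sumstar_{\substack{\chi \bmod{q} \\ \chi^j=\chi_0}} |L(\half,\chi)|^8\,\Phi\!\leg{q}{Q}\right)^{1/8}.
\]
Using the boundedness and compact support of $\Phi$ and dropping the order restriction on $\chi$, the remaining eighth moment is controlled by the well-known bound
\[
\sum_{q\leq Q}\;\sumstar_{\chi \bmod{q}}|L(\half,\chi)|^8 \;\ll\; Q^{2+\varepsilon},
\]
which under GLH is immediate from the pointwise Lindelöf bound $|L(\half,\chi)|\ll q^{\varepsilon}$ combined with the standard count $\ll Q^2$ of primitive characters of conductor at most $Q$. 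This bounds the eighth-moment factor by $Q^{(2+\varepsilon)/8}$, and rearranging yields $\mathcal{N}(Q) \gg Q^{6/7-\varepsilon}$, as desired.

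The only non-routine point is the pairing $\chi \leftrightarrow \overline\chi$ ensuring that the first moment is real-valued and positive after taking absolute values; beyond this, the argument is a mechanical application of Hölder's inequality. The exponent $6/7$ is mechanically forced by the $(7/8,1/8)$ Hölder split together with the $Q^{2+\varepsilon}$ bound on the unrestricted eighth moment; a finer moment estimate tailored to the sparse family of order-$j$ characters would improve the exponent (indeed, pointwise GLH alone trivially gives $\mathcal{N}(Q) \gg Q^{1-\varepsilon}$), but the weaker conclusion stated in the corollary suffices.
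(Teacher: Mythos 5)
Your proposal is correct and follows exactly the route the paper indicates: the first-moment asymptotic from Theorem \ref{firstmoment} at $\alpha=0$ as a lower bound, the classical eighth-moment bound $\ll Q^{2+\varepsilon}$, and H\"older with exponents $(8/7,8)$, which is precisely the argument of \cite[Corollary 1.2]{B&Y} that the paper invokes. The only quibble is that the pairing $\chi\leftrightarrow\overline\chi$ you single out is unnecessary, since $\bigl|\sum_\chi L(\tfrac12,\chi)\Phi(q/Q)\bigr|\leq\sum_\chi |L(\tfrac12,\chi)|\Phi(q/Q)$ holds for complex values anyway; your closing observation that pointwise GLH already gives the stronger count $\gg Q^{1-\varepsilon}$ is also accurate.
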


We note that the cases $j=3$ and $4$, Corollary~\ref{coro:nonvanish} is already contained in \cite[Corollary 1.2]{B&Y} and  \cite[Corollary 1.2]{G&Zhao7}, respectively.  Throughout the paper, $\varepsilon$, as usual, represents a small positive real number which may not be the same in each occurrence.

\section{Preliminaries}
\label{sec 2}

\subsection{Residue symbols}
\label{sec2.4}

   For any number field $K$, let $\mathcal{O}_K, U_K, D_K$ and $K^{\times}$ denote the ring of integers, the group of units, the discriminant of $K$ and the multiplicative group $K \setminus \{0 \}$, respectively. In this paper, we fix $K=\mq(i)$ or $\mq(\sqrt{-3})$. Then the ring of integers $\mathcal{O}_K$ is a free $\mz$ module such that $\mathcal{O}_K=\mz+i \mz$ (resp. $\mz+\omega \mz$) when $K=\mq(i)$ (resp. $\mq(\sqrt{-3})$). Here $\omega=(-1+\sqrt{-3})/2$.  The group of units $U_K= \{ \pm 1 , \pm i \}, D_K=-4$ (resp. $U_K=\{ \pm 1, \pm \omega, \pm \omega^2 \}, D_K=-3$) when $K=\mq(i)$ (resp. $\mq(\sqrt{-3})$).  Let $\delta_K=\sqrt{D_K}$ so that the different of $K$ is the principal ideal $(\delta_K)$. These facts can be found in \cite[Section 3.8]{iwakow}. \newline

  We write $\chi^{(m)}_j$ (resp. $\chi_{j, m}$) for the $j$-order residue symbol $\leg {m}{\cdot}_j$ (resp.  $\leg {\cdot}{m}_j$) in a number field defined for $(m,j)=1$ in \cite[Section 4.1]{Lemmermeyer}. Note that the existence of these symbols depends on whether the corresponding number field contains a primitive $j$-th root of unity. In particular, this implies that the $j$-order residue symbols for $j=4$ (resp. $j=3,6$) are defined when $K=\mq(i)$ (resp. $K=\mq(\sqrt{-3})$). \newline

 For any positive rational integer $n$, we say an element $c \in \mathcal{O}_K$ is $n$-th power free if no $n$-th power of any prime divides $c$. We note the following classification of all primitive Dirichlet characters of order $3,4$ and $6$.  We omit its proof here as it is similar to \cite[Lemma 2.1]{B&Y}.
\begin{lemma}
\label{lemma:quarticclass}
 For $j=3,4,6$, the primitive $j$-th order Dirichlet characters of conductor $q$ co-prime to $j$ such that their $i$-th power remain primitive for $1 \leq i \leq j-1$ are of the form $\widehat \chi_{j,n}:m \mapsto \leg{m}{n}_j$ for some primary, square-free $n \in \mathcal O_K$ such that $N(n) = q$ and $n$ is not divisible by any rational primes.
\end{lemma}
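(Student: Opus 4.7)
My plan is to decompose $\chi$ into local factors via the Chinese Remainder Theorem, use the cyclic structure of the local unit groups to determine which primes $\ell \mid q$ can support such a character, and then identify the resulting local characters with $j$-th power residue symbols in $\mathcal{O}_K$; the argument parallels \cite[Lemma 2.1]{B&Y}. First, I would factor $q = \prod_\ell \ell^{a_\ell}$ as a rational integer and write $\chi = \prod_\ell \chi_\ell$ with each $\chi_\ell$ a character of $(\mz/\ell^{a_\ell}\mz)^*$. The hypothesis that $\chi$ has order exactly $j$, together with primitivity of $\chi^i$ for $1 \leq i \leq j-1$, transfers to each $\chi_\ell$, with the orders of the $\chi_\ell$ multiplying to $j$.

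Next, for each odd prime $\ell$ with $\gcd(\ell, j) = 1$, the group $(\mz/\ell^a\mz)^*$ is cyclic of order $\ell^{a-1}(\ell-1)$. Any character of order dividing $j$ factors through the prime-to-$\ell$ quotient $(\mz/\ell\mz)^*$, so primitivity forces $a = 1$. Existence of an order-$j$ character then imposes $j \mid \ell - 1$: one obtains $\ell \equiv 1 \pmod 4$ for $j = 4$ (i.e., $\ell$ splits in $\mq(i)$) and $\ell \equiv 1 \pmod 3$ for $j \in \{3,6\}$ (i.e., $\ell$ splits in $\mq(\sqrt{-3})$).

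Then, for each split prime $\ell = \pi\bar\pi$ in $\mathcal{O}_K$, the isomorphism $\mathcal{O}_K/(\pi) \cong \fief_\ell$ shows that $m \mapsto \leg{m}{\pi}_j$ restricted to $\mz$ is a character of $(\mz/\ell\mz)^*$ of order exactly $j$, hence primitive; each intermediate power is nontrivial mod $\ell$ and therefore also primitive. Ranging over the two choices $n_\ell \in \{\pi, \bar\pi\}$ and selecting the primary associate at each $\ell$ produces all admissible $\chi_\ell$ exactly once. Multiplying across $\ell \mid q$ and invoking multiplicativity of the residue symbol yields $\chi = \widehat{\chi}_{j,n}$ with $n = \prod_\ell n_\ell$ primary, squarefree in $\mathcal{O}_K$, of norm $q$, and not divisible by any rational prime, since only one of $\pi, \bar\pi$ is kept at each split $\ell$ and inert primes are excluded by the splitting condition.

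The main technical point lies in the sextic case, where one must verify that $\leg{\cdot}{\pi}_6$ restricted to $\mz$ has exact order $6$ and that the factorization of an order-$6$ character into its order-$2$ and order-$3$ components is compatible with this presentation. Both facts follow from $(\ell-1)/6 \in \mz$ for $\ell \equiv 1 \pmod 6$ combined with the multiplicativity of power residue symbols. The converse direction --- that any primary squarefree $n \in \mathcal{O}_K$ with $N(n) = q$ and not divisible by a rational prime yields a character satisfying all the hypotheses --- is then immediate from the same identifications.
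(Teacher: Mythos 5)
Your argument is correct and is essentially the one the paper intends: the proof of this lemma is omitted in the paper with a reference to \cite[Lemma 2.1]{B&Y}, whose argument is precisely this local decomposition via the Chinese Remainder Theorem, the deduction that every $\ell \mid q$ must split in $\mathcal O_K$, and the identification of the two order-$j$ characters modulo a split $\ell$ with $\leg{\cdot}{\pi}_j$ and $\leg{\cdot}{\overline{\pi}}_j$. One small slip worth correcting: the orders of the local components $\chi_\ell$ do not \emph{multiply} to $j$; rather, primitivity of every power $\chi^i$ for $1 \leq i \leq j-1$ forces each $\chi_\ell$ to have order exactly $j$ (otherwise the conductor of some $\chi^i$ would drop), which is in fact what your subsequent construction uses.
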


Let $N(q)$ stand for the norm of any $q \in \mathcal O_K$. Our next result evaluates $\chi^{(m)}_j(d)$ for at rational integers $m, d$.
\begin{lemma}
\label{lemma:chivalue}
 Let $j \in \{ 3,4, 6\}$ and $m, d \in \mz$ with $(md, j)=1$ and $(m,d)=1$. Then we have
\begin{align}
\label{chivalue}
\begin{split}
 \chi^{(m)}_j(d)=1.
\end{split}
\end{align}
\end{lemma}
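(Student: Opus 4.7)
The plan is to exploit the complete multiplicativity of the residue symbol in its lower argument to reduce the claim to the case $d=p$ a rational prime, and then to split into cases according to whether $p$ splits or remains inert in $\mathcal O_K$. Since $(d,j)=1$, no prime $p \mid d$ divides the discriminant $D_K\in\{-3,-4\}$, so $p$ is unramified in $K$ and only these two unramified cases need be treated. The hypothesis $(m,d)=1$ ensures that $\chi^{(m)}_j(p)$ is a unit in $\mu_j$ rather than zero.

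\emph{Split case.} If $p=\pi\bar\pi$, the defining congruence $\chi^{(m)}_j(\pi)\equiv m^{(N(\pi)-1)/j}\pmod \pi$ combined with the fact that $m\in\mz$ is fixed by complex conjugation (which interchanges $\pi$ and $\bar\pi$ and fixes $\mu_j$ setwise) gives $\chi^{(m)}_j(\bar\pi)=\overline{\chi^{(m)}_j(\pi)}$. Since the symbol lies in $\mu_j$ and hence has absolute value one,
\[
\chi^{(m)}_j(p) \;=\; \chi^{(m)}_j(\pi)\,\chi^{(m)}_j(\bar\pi) \;=\; \chi^{(m)}_j(\pi)\,\overline{\chi^{(m)}_j(\pi)} \;=\; 1.
\]

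\emph{Inert case.} Here $N(p)=p^2$ and the desired congruence $m^{(p^2-1)/j}\equiv 1\pmod p$ follows from Fermat's little theorem as soon as $j\mid p+1$. This divisibility is verified in each of the three cases: $j=4$ inert in $\mq(i)$ means $p\equiv 3\pmod 4$; $j=3$ inert in $\mq(\sqrt{-3})$ means $p\equiv 2\pmod 3$; and for $j=6$ in $\mq(\sqrt{-3})$, the coprimality $(d,6)=1$ forces $p$ odd, which together with $p\equiv 2\pmod 3$ yields $p\equiv 5\pmod 6$. The distinctness of $j$-th roots of unity in $\mathbb F_{p^2}$ (which holds since $j\mid p^2-1$ and $p\nmid j$) upgrades the congruence to the equality $\chi^{(m)}_j(p)=1$.

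\emph{Main obstacle.} The only point requiring genuine care is the Galois-equivariance identity $\chi^{(m)}_j(\bar\pi)=\overline{\chi^{(m)}_j(\pi)}$ used in the split case; it rests on a consistent choice of primitive $j$-th root of unity in $K$ (e.g.\ $i$ for $j=4$, $\omega$ for $j=3$, and $-\omega^2$ for $j=6$) together with the observation that complex conjugation sends each such root of unity to its inverse. Modulo this bookkeeping, the remainder of the proof is a routine case analysis, and no deeper input (such as cubic or quartic reciprocity) is needed.
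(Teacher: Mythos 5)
Your proof is correct, and it takes a genuinely different route from the paper's. The paper disposes of $j=3$ and $j=4$ by citing Ireland and Rosen (the Corollary to Proposition 9.3.4 and Proposition 9.8.4, respectively) and then handles $j=6$ by a reduction trick: writing $\leg{m}{d}_6=\leg{m}{d}_6^3\leg{m}{d}_6^{-2}=\leg{m}{d}_2\leg{m}{d}_3^{-1}$ and observing that the quadratic symbol equals $\leg{N(m)}{d}_{\mz}=\leg{m^2}{d}_{\mz}=1$ while the cubic factor is $1$ by the already-cited case. You instead give a uniform, self-contained argument from the defining congruence $\leg{m}{\mathfrak p}_j\equiv m^{(N\mathfrak p-1)/j}\pmod{\mathfrak p}$: multiplicativity in the lower entry reduces to rational primes $p\mid d$ (all unramified since $(d,j)=1$ keeps $p$ away from $D_K$), the split case follows from the Galois-equivariance $\leg{m}{\bar\pi}_j=\overline{\leg{m}{\pi}_j}$ for real $m$ so that the two conjugate factors cancel, and the inert case follows from Fermat once one checks $j\mid p+1$ in each instance, with separability of $x^j-1$ over $\mathbb{F}_{p^2}$ upgrading the congruence to an equality. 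All of these steps check out, including the slightly delicate conjugation identity (apply complex conjugation to the defining congruence and use uniqueness of the root of unity in its residue class mod $\bar\pi$) and the observation that the symbol depends only on the ideal $(d)$, so signs and units are harmless. What your approach buys is transparency and uniformity across $j=3,4,6$ at the cost of redoing what the citations contain; what the paper's approach buys is brevity and the mildly amusing fact that the sextic case needs no new local analysis, only the factorization of the exponent through the quadratic and cubic symbols.
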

\begin{proof}
 The proof for the cases of $j=3$ and $4$ can be found in the Corollary to Proposition 9.3.4 in \cite{I&R} and \cite[Proposition 9.8.4]{I&R}, respectively. Next note that \cite[Proposition 4.2, iii)]{Lemmermeyer} implies that
\begin{align*}
\begin{split}
 \leg {m}{d}_2=\leg {N(m)}{d}_{\mz}=\leg {m^2}{d}_{\mz}=1,
\end{split}
\end{align*}
  where $\leg {\cdot}{\cdot}_{\mz}$ is the Kronecker symbol in $\mz$. It follows that
\begin{align*}
\begin{split}
 \leg {m}{d}_6=\leg {m}{d}^3_6\leg {m}{d}^{-2}_6=\leg {m}{d}_{2}=\leg {m}{d}^{-1}_{3}=1.
\end{split}
\end{align*}
  This completes the proof.
\end{proof}

    It is well-known that every ideal of $\mathcal O_K$ is principal for $K=\mq(i)$ or $\mq(\sqrt{-3})$, so that one may fix
a unique generator for each non-zero ideal. We now describe a set of generators for each ideal co-prime to a given ideal and we call these generators primary elements. For $K=\mq(i)$, we say an element $n$ is primary if $(n, 2)=1$ and $n \equiv 1 \pmod {(1+i)^3}$. Note that $(1+i)$ is the only ideal above the rational ideal $(2) \in \mq$ and that the group $\left (\mathcal{O}_K / ((1+i)^3) \right )^{\times}$ is isomorphic to $U_K$. It follows that every ideal co-prime to $(2)$ in $\mathcal O_K$ has a unique primary generator. \newline

 For $K=\mq(\sqrt{-3})$, we define two sets of primary elements depending on whether a cubic residue symbol or a sextic residue symbol is involved. To distinguish them, we shall call the elements in the first set by primary elements and the elements in the second set by $E$-primary elements. We first note that $(1-\omega)$ is the only ideal above the rational ideal $(3) \in \mq$ (see \cite[Proposition 9.1.4]{I&R}). We then
define for any $n \in \mathcal O_K$ to be primary if $(n, 3)=1$ and $n \equiv 1 \pmod {3}$.  As the group $\left (\mathcal{O}_K / (3) \right )^{\times}$ is isomorphic to $U_K$, it follows that every ideal co-prime to $(3)$ in $\mathcal O_K$ has a unique primary generator.  Next, we note that the rational ideal $(2)$ is inert in $\mathcal O_K$ and we define $n=a+b\omega \in \mathcal O_K$ with $a, b \in
\mz$  to be $E$-primary if $(n,6)=1$, $n \equiv \pm 1 \pmod 3$ and $n$ satisfies
\begin{align*}
\begin{split}
   & a+b \equiv 1 \pmod 4, \quad  \text{if} \quad 2 | b,  \\
   & b \equiv 1 \pmod 4, \quad  \text{if} \quad 2 | a,  \\
   & a \equiv 3 \pmod 4, \quad  \text{if} \quad 2 \nmid ab.
\end{split}
\end{align*}
  Our definition of $E$-primary elements follows from  the notations in \cite[Section 7.3]{Lemmermeyer}. It is shown in \cite[Lemma
7.9]{Lemmermeyer} that for any $(n, 6)=1$, the following two statements are equivalent.
\begin{enumerate}
\item $n$ is $E$-primary.
\item$n^3=c+d\omega$ with $c, d \in \mz$ such that $6 | d$ and $c+d \equiv 1 \pmod{4}$.
\end{enumerate}
  Thus products of $E$-primary elements are again $E$-primary. It is easy to see that each ideal co-prime to $6$ in $\mathcal O_K$ has a unique primary or $E$-primary generator. \newline

 For $K=\mq(i)$, the following quartic reciprocity law (see \cite[Theorem 6.9]{Lemmermeyer}) holds for two co-prime primary elements $m, n
\in \mathcal{O}_{K}$ with $(mn, 2)=1$,
\begin{align}
\label{quartrec}
 \leg{m}{n}_4 = \leg{n}{m}_4(-1)^{((N(n)-1)/4)((N(m)-1)/4)}.
\end{align}

   For $K=\mq(\sqrt{-3})$, we have the following cubic reciprocity law (see \cite[Theorem 7.8]{Lemmermeyer}) for two co-prime primary
elements $m, n \in \mathcal{O}_{K}$ with $(mn, 3)=1$,
\begin{align}
\label{cubicrec}
    \leg {n}{m}_3 =\leg{m}{n}_3.
\end{align}
Moreover, the following sextic reciprocity law holds for two $E$-primary, co-prime numbers $n, m \in \mathcal O_K$ with $(mn, 6)=1$,
\begin{align}
\label{quadreciQw}
    \leg {n}{m}_6 =\leg{m}{n}_6(-1)^{((N(n)-1)/2)((N(m)-1)/2)}.
\end{align}

\subsection{Gauss sums}
\label{section:Gauss}

  Let $K$ be a number field of class number one. Following the nomenclature of \cite[Section 3.8]{iwakow}, we say a Dirichlet character $\chi$ modulo $(q) \neq (0)$ is a homomorphism
\begin{align*}
  \chi: \left (\mathcal{O}_K / (q) \right )^{\times}  \rightarrow S^1 :=\{ z \in \mc :  |z|=1 \}.
\end{align*}
Also, $\chi$ is said to be primitive modulo $(q)$ if it does not factor through $\left (\mathcal{O}_K / (q') \right )^{\times}$ for any divisor $q'$ of $q$ with $N(q')<N(q)$. \newline

  If further $\chi(u)=1$ for any $u \in U_K$, we may also regard $\chi$ as defined on ideals of $\mathcal O_K$ since every ideal in $\mathcal O_K$ is principal. We then say such a character $\chi$ is a Hecke character modulo $(q)$ of trivial infinite type. In this case, the Hecke character $\chi$ is primitive if it is primitive as a Dirichlet character. We say that $\chi$ is a Hecke character modulo $q$ instead of modulo $(q)$ when there is no ambiguity. \newline

   For any Dirichlet character $\chi$ modulo $(q)$ and any $k \in \mathcal O_K$, we define the associated Gauss sum $g_K(k, \chi; q)$ by
\begin{align*}
 g_K(k,\chi; q ) = \sum_{x \shortmod{q}} \chi(x) \widetilde{e}_K\leg{kx}{q}, \quad \mbox{where} \quad    \widetilde{e}_K(z) =\exp \left( 2\pi i  \left( \frac {z}{\sqrt{D_K}} -
\frac {\overline{z}}{\sqrt{D_K}} \right) \right).
\end{align*}
We note that the definition of $g_K(k,\chi; q)$ depends on the choice of the generator $q$ of the ideal $(q)$. But if $\chi$ is a Hecke character modulo $(q)$ of trivial infinite type, it is easy to see that $g_K(k,\chi; q)$ is independent of the choice of such a generator.  In this case, we write $g_K(k,\chi)$ for $g_K(k,\chi; q)$ and $g_K(\chi)$ for $g_K(1,\chi)$. Notice that whenever $\chi_{j, n}$ it is defined, it is a Dirichlet character modulo $(n)$. We then write $g_K(k, \chi_{j, n})$ for $g_K(k, \chi_{j, n}; n)$.  Moreover, for the special case when $K=\mq$, we always take the generator $q$ for an ideal $0 \neq (q)\in \mz$ with $q>0$. We then write $\tau(k, \chi)$ for $g_\mq(k, \chi)$ and $\tau(\chi)$ for $\tau(1, \chi)$. Note that $D_\mq=1$ so that it follows from the definition that for any $h \in \mz$,
\begin{equation*}
  \tau(h, \chi) =  \sum_{1 \leq x \leq q}\chi(x) e \left( \frac{hx}{q} \right).
\end{equation*}
Here $e(z) = \exp (2 \pi i z)$ for any complex number $z$. \newline

  We have the following properties for $g_K(k,\chi_{j, n})$.
\begin{lemma}
   Let $j=3,4$ or $6$. We have
\begin{align}
\label{eq:gmult}
 g_K(rs, \chi_{j, n})  =& \overline{\leg{s}{n}}_j g_{K}(r,\chi_{j, n}), \quad (s,n)=1, \\
\label{2.03}
   g_{K}(r,\chi_{j, n_1n_2}) =& \leg{n_2}{n_1}_j\leg{n_1}{n_2}_jg_{K}(r, \chi_{j, n_1}) g_{K}(r, \chi_{j, n_1}), \quad (n_1, n_2) = 1, \\
\label{grel}
 g_K(\chi_{j,dn}) =& \overline \chi_{j,n}(d^{j-2})g_K(\chi_{j,d})g_K(\chi_{j,n}), \quad (n, d) = 1, \ d \in \mz, \ n, d \odd , \\
\label{2.1}
   \Big|g_K(\chi_{j,n})\Big | =& \begin{cases}
    \sqrt{N(n)} \qquad & \text{if $n$ is square-free}, \\
     0 \qquad & \text{otherwise}.
    \end{cases}
\end{align}
\end{lemma}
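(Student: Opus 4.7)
The plan is to establish the four identities in order, the first two by elementary manipulations of the defining sum and the latter two by combining them with the reciprocity laws of Subsection~\ref{sec2.4}. For \eqref{eq:gmult}, the substitution $x \mapsto s^{-1}x$ (legitimate since $(s,n)=1$) in the defining sum replaces $rsx/n$ by $rx/n$ in the exponent and pulls the factor $\chi_{j,n}(s^{-1}) = \overline{\leg{s}{n}}_j$ out of the character. For \eqref{2.03}, I would use the CRT isomorphism $\mathcal{O}_K/(n_1 n_2) \simeq \mathcal{O}_K/(n_1) \times \mathcal{O}_K/(n_2)$ via the parametrisation $x \equiv n_2 y_1 + n_1 y_2 \pmod{n_1 n_2}$: the exponent splits as $\widetilde{e}_K(r y_1/n_1)\widetilde{e}_K(r y_2/n_2)$, and the identity $\chi_{j,n_1 n_2} = \chi_{j,n_1}\chi_{j,n_2}$ produces $\leg{n_2}{n_1}_j\leg{n_1}{n_2}_j\chi_{j,n_1}(y_1)\chi_{j,n_2}(y_2)$, yielding the claimed factorization into $g_K(r,\chi_{j,n_1})\,g_K(r,\chi_{j,n_2})$.

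For \eqref{grel}, I would apply \eqref{2.03} with $n_1=d$, $n_2=n$ and $r=1$, reducing matters to identifying $\leg{n}{d}_j\leg{d}{n}_j$ with $\overline{\chi}_{j,n}(d^{j-2})$. The cubic law \eqref{cubicrec}, the quartic law \eqref{quartrec}, and the sextic law \eqref{quadreciQw} all yield $\leg{n}{d}_j = \leg{d}{n}_j$ once one checks that the sign factor is trivial: in the quartic case, primarity of $d \in \mz$ forces $d \equiv 1 \pmod 4$, so $(N(d)-1)/4 = (d^2-1)/4$ is even; in the sextic case $(N(d)-1)/2 = (d-1)(d+1)/2$ is even for every odd $d$. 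Hence $\leg{n}{d}_j\leg{d}{n}_j = \leg{d}{n}_j^{2} = \leg{d^2}{n}_j$, and since $\leg{\cdot}{n}_j^{j}=1$ this equals $\overline{\leg{d^{j-2}}{n}}_j = \overline{\chi}_{j,n}(d^{j-2})$, as desired.

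For \eqref{2.1}, iterating \eqref{2.03} across the prime factorization of $n$ reduces the claim to evaluating $|g_K(\chi_{j,\pi^k})|$ for a prime $\pi$ of $\mathcal{O}_K$. When $k=1$, $\chi_{j,\pi}$ is a primitive character modulo $(\pi)$ and computing $g_K(\chi_{j,\pi})\overline{g_K(\chi_{j,\pi})}$ by the change of variables $x=yz$ inside the double sum (via \eqref{eq:gmult}) followed by orthogonality yields $N(\pi)$. When $k \geq 2$, the symbol $\chi_{j,\pi^k}(x) = \leg{x}{\pi}_j^{k}$ depends only on $x \bmod \pi$, so writing $x = a + \pi b$ with $a$ modulo $\pi$ and $b$ modulo $\pi^{k-1}$ isolates the inner sum $\sum_{b \bmod \pi^{k-1}} \widetilde{e}_K(b/\pi^{k-1})$, which vanishes because it is the sum of a nontrivial additive character on $\mathcal{O}_K/(\pi^{k-1})$. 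I expect the main technical obstacle to be the bookkeeping of the sign factors in the quartic and sextic reciprocity steps of \eqref{grel}, where one must carefully match the constraints imposed by primarity (resp. $E$-primarity) of $d \in \mz$ against the sign exponents in \eqref{quartrec} and \eqref{quadreciQw}.
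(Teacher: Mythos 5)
Your proposal is correct and follows essentially the same route as the paper: \eqref{eq:gmult} from the definition, \eqref{2.03} via the Chinese remainder parametrisation $x=n_2y_1+n_1y_2$, and \eqref{grel} by combining \eqref{2.03} with the reciprocity laws \eqref{quartrec}, \eqref{cubicrec}, \eqref{quadreciQw} after checking that $(N(d)-1)/4$ (resp.\ $(N(d)-1)/2$) is even for odd rational $d$, so that $\leg{n}{d}_j\leg{d}{n}_j=\leg{d}{n}_j^2=\overline{\chi}_{j,n}(d^{j-2})$. The only divergence is at \eqref{2.1}, which the paper simply quotes from Patterson, whereas you supply the standard direct argument (reduction to prime powers via \eqref{2.03}, orthogonality for $k=1$, vanishing of the nontrivial additive character sum for $k\geq 2$); that argument is sound.
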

\begin{proof}
  The identity \eqref{eq:gmult} follows easily from the definition and similarly \eqref{2.03} emerges upon using the Chinese remainder theorem to write $x \pmod n$ as $x_1n_1+x_2n_2$ with $x_i$ varying $\pmod {n_i}, \ i=1,2$. For \eqref{grel}, observe that a primary rational integer $d$ must be odd when $j=4$ or $6$ and $N(d)=d^2 \equiv 1 \pmod 4$ for any odd rational integer $d \in \mz$.  This together with various reciprocity laws \eqref{quartrec} and \eqref{quadreciQw} then implies that for primary elements $n, d \in \mathcal O_K$ with $(n, d)=1$, we have for $j=4,6$,
\begin{align*}
   \leg{n}{d}_j=\leg{d}{n}_j.
\end{align*}
  On the other hand, when $j=3$, we derive from \eqref{cubicrec} and the multiplicity of cubic symbol that the above continues to hold when $2|d$ in this case. \newline

 We then deduce that for these $n$, $d$ and for all $j$'s under our consideration,
\begin{align*}
\begin{split}
g_K(\chi_{j,dn}) = \leg{d}{n}_j\leg{n}{d}_jg_K(\chi_{j,d})g_K(\chi_{j,n})=\leg{d}{n}^2_jg_K(\chi_{j,d})g_K(\chi_{j,n}).
\end{split}
\end{align*}
 The above implies \eqref{grel}. Lastly, the relation \eqref{2.1} is given on \cite[p. 195]{P}). This completes the proof of the lemma.
\end{proof}

     Recall from Lemma \ref{lemma:quarticclass} that $\widehat \chi_{j,n}$ is a primitive $j$-th order Dirichlet character for a primary $n \in \mathcal O_K$. We then have the following properties concerning $\tau$.
\begin{lemma}
\label{quarticGausssum1}
   With the notation as above. We have for $j=3,4$ or $6$,
\begin{align}
\label{tauprim}
  \tau(h, \chi) =&  \overline{\chi}(h)\tau(\chi), \quad \text{if $\chi$ is primitive}, \\
\label{tauprim1}
   \tau(\widehat\chi_{j,n})  =& \begin{cases}
      \overline{\leg {\sqrt{D_K}}{n}}_jg_K(\chi_{j,n}), \qquad & j=3, \\
     i^{(1-\chi_{4,n}(-1))/2}\overline{\leg {(2i)^3}{n}}_j g_K(\chi_{j,n}), \qquad & j=4, \\
     \overline{\leg {-D^2_K}{n}}_jg_K(\chi_{j,n}), \qquad & j=6.
    \end{cases}
\end{align}
\end{lemma}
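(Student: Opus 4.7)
The first identity \eqref{tauprim} is the standard relation for rational Gauss sums of primitive characters. When $(h,q) = 1$, the substitution $x \mapsto h^{-1}x \shortmod{q}$ in the defining sum for $\tau(h,\chi)$ produces $\tau(h,\chi) = \overline\chi(h)\tau(\chi)$; when $(h,q) > 1$, primitivity of $\chi$ forces $\tau(h,\chi) = 0$, which matches $\overline\chi(h)\tau(\chi) = 0$. I will indicate this only briefly.

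For \eqref{tauprim1}, the plan is to reduce the $K$-Gauss sum $g_K(\chi_{j,n})$ to a rational Gauss sum. By Lemma \ref{lemma:quarticclass}, $n$ is square-free and divisible by no rational prime, from which the natural map $\mz/N(n)\mz \to \mathcal O_K/(n)$ is a ring isomorphism: its kernel is trivial because $n\mid a$ in $\mathcal O_K$ for $a \in \mz$ would force a rational prime divisor of $n$, and both sides have cardinality $N(n)$. Under this identification the defining sum becomes
\begin{align*}
g_K(\chi_{j,n}) = \sum_{a \shortmod{N(n)}} \widehat\chi_{j,n}(a)\, \widetilde e_K\lp\frac{a}{n}\rp.
\end{align*}

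Next I will make the exponential rational. Writing $a/n = a\bar n/N(n)$ and using that $\sqrt{D_K}$ is purely imaginary, a direct computation gives $\widetilde e_K(a/n) = e\lp a\, t(n)/N(n)\rp$ for an explicit nonzero rational integer $t(n)$: for $K = \mq(i)$ with $n = \alpha + \beta i$ one gets $t(n) = -\beta$, and for $K = \mq(\sqrt{-3})$ with $n = a'+b'\omega$ one gets $t(n) = -b'$. In either case $(t(n), N(n)) = 1$, since any common prime divisor would give a rational prime dividing $n$. Applying \eqref{tauprim} then yields
\begin{align*}
g_K(\chi_{j,n}) = \tau(t(n), \widehat\chi_{j,n}) = \overline{\widehat\chi_{j,n}(t(n))}\, \tau(\widehat\chi_{j,n}),
\end{align*}
so that $\tau(\widehat\chi_{j,n}) = \leg{t(n)}{n}_j\, g_K(\chi_{j,n})$.

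The final and hardest step is to rewrite $\leg{t(n)}{n}_j$ in the form appearing on the right-hand side of \eqref{tauprim1}. I will invoke the reciprocity laws \eqref{quartrec}, \eqref{cubicrec}, \eqref{quadreciQw} to flip the residue symbol, use Lemma \ref{lemma:chivalue} to collapse the contribution of purely rational integer factors, and appeal to the supplementary laws for the units of $\mathcal O_K$ and for $\sqrt{D_K}$ to produce the stated symbols $\leg{\sqrt{D_K}}{n}_j$, $\leg{(2i)^3}{n}_j$, $\leg{-D_K^2}{n}_j$. The residual unit factor $i^{(1-\chi_{4,n}(-1))/2}$ in the $j=4$ case should emerge from the discrepancy between $(2i)^3 = -8i$ and $t(n) = -\beta$: their quotient is a unit whose fourth-power residue symbol is governed, via the supplementary law, by $\chi_{4,n}(-1)$. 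Carrying out this unit bookkeeping cleanly in each of the three cases $j = 3,4,6$, where the normalisation of $n$ differs (primary vs.\ $E$-primary), is where I expect the main technical effort to lie.
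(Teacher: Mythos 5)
Your plan follows essentially the same route as the paper's proof: the reduction you carry out explicitly (ring isomorphism $\mz/N(n)\mz\cong\mathcal O_K/(n)$, rationalising the exponential, then applying \eqref{tauprim}) yields $\tau(\widehat\chi_{j,n})=\leg{t(n)}{n}_j g_K(\chi_{j,n})$, which, since $\sqrt{D_K}\,t(n)\equiv\overline n\pmod n$, is exactly the paper's intermediate identity $\tau(\widehat\chi_{j,n})=\overline{\leg{\sqrt{D_K}}{n}}_j\leg{\overline n}{n}_j g_K(\chi_{j,n})$ imported from \cite{B&Y}. The remaining evaluation of $\leg{\overline n}{n}_j$ via reciprocity and supplementary laws, which you defer as the main technical effort, is precisely what the paper also defers, citing the computations of \cite{B&Y}, \cite{G&Zhao3} and \cite{G&Zhao7} for $j=3$, $6$ and $4$ respectively.
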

\begin{proof}
  The proof of \eqref{tauprim} follows from \cite[\S 9]{Da}.  To establish \eqref{tauprim1}, using arguments similar to those given on \cite[p. 884-885]{B&Y} (be aware of the difference between the definition of $g(n)$ in \cite{B&Y} and our definition of $ g_K(\chi_{j,n})$), we get
\begin{equation}
\label{tau}
 \tau(\widehat\chi_{j,n}) =   \overline{\leg {\sqrt{D_K}}{n}}_j\leg {\overline{n}}{n}_jg_K(\chi_{j,n}).
\end{equation}

  Note that we have $\leg {\overline{n}}{n}_3=1$ for $n \equiv \pm 1 \pmod 3$ (see \cite[p. 884--885]{B&Y}). This gives the formula for $j=3$ in \eqref{tauprim1}. It also follows that we have
\begin{equation}
\label{n6simplified}
  \leg {\overline{n}}{n}_6=\leg {\overline{n}}{n}^3_6\leg {\overline{n}}{n}^{-2}_6=\leg {\overline{n}}{n}_2\leg {\overline{n}}{n}^{-1}_3=\leg {\overline{n}}{n}_2.
\end{equation}
 It is shown on \cite[p. 367--368]{G&Zhao3} that
\begin{align}
\label{n2}
  \leg {\sqrt{D_K}\overline{n}}{n}_2=\leg {-1}{n}_2.
\end{align}

  We deduce readily from \eqref{tau}--\eqref{n2} that
\begin{align*}
 \tau(\widehat\chi_{6,n}) =   \overline{\leg {\sqrt{D_K}}{n}}_6 \overline{\leg {\sqrt{D_K}}{n}}_2 \leg {-1}{n}_2g_K(\chi_{6,n})=\overline{\leg {-D^2_K}{n}}_6g_K(\chi_{6,n}).
\end{align*}
This leads to the case $j=6$ of \eqref{tauprim1}. \newline

  Lastly, the computations on \cite[p. 347]{G&Zhao7}
\begin{align*}
  \leg {\overline{n}}{n}_4=\begin{cases}
   \leg {2i}{n}_4 \qquad & \text{if $\leg {-1}{n}_4=1$}, \\ \\
     i\leg {2i}{n}_4 \qquad & \text{if $\leg {-1}{n}_4=-1$ }.
    \end{cases}
\end{align*}
  We conclude from the above and \eqref{tau} that the identity for $j=4$ in \eqref{tauprim1} is valid. This completes the proof of the lemma.
\end{proof}

\subsection{Estimating $L$-functions}
\label{sect: Lfcn}

  Let $K$ be an imaginary quadratic  number field of class number one and $\psi$ a Hecke character modulo $m$ of trivial infinite type. We reserve the letter $\varpi$ for a prime in $\mathcal O_K$. Write $\chi$ for the primitive Hecke character modulo $q$ that induces $\psi$.  Moreover, write $m=m_1m_2$ uniquely such that $(m_1, q)=1$ and that $\varpi |m_2 \Rightarrow \varpi|q$. Then
\begin{align}
\label{Ldecomp}
\begin{split}
	L(s,  \psi )=&	\prod_{\varpi | m_1}(1-\chi(\varpi)N(\varpi)^{-s}) \cdot
L(s, \chi).
\end{split}
\end{align}
   Note that
\begin{align*}
 \Big |1-\chi(\varpi)N(\varpi)^{-s}\Big | \leq 2N(\varpi)^{\max (0,-\Re(s))}.
\end{align*}
  It follows that
\begin{align}
\label{Lnbound}
\begin{split}
 \Big | \prod_{\varpi | m_1}\Big(1-\chi(\varpi)N(\varpi)^{-s} \Big )\Big | \ll
2^{\mathcal{W}_K(q_1)}N(m_1)^{\max (0,-\Re(s))} \ll N(m_1)^{\max (0,-\Re(s))+\varepsilon},
\end{split}
\end{align}
  where $\mathcal{W}_K$ denotes the number of distinct prime factors of $n$ and the last estimation above follows from the well-known bound
(which can be derived similar to the proof of the classical case over $\mq$ given in\cite[Theorem 2.10]{MVa1})
\begin{align*}
   \mathcal{W}_K(h) \ll \frac {\log N(h)}{\log \log N(h)}, \quad \mbox{for} \quad N(h) \geq 3.
\end{align*}

  We conclude from \eqref{Ldecomp} and \eqref{Lnbound} that
\begin{align}
\label{Lnbound10}
\begin{split}
	L(s,  \psi ) \ll &	N(m)^{\max (0,-\Re(s))+\varepsilon}|L(s, \chi)|.
\end{split}
\end{align}

Next, the Lindel\"of hypothesis asserts that when $\Re(s)=1/2$,
\begin{align}
\label{LH}
\begin{split}
  L(s,  \chi) \ll (N(q)(1+|s|))^{\varepsilon}.
\end{split}
\end{align}

  Now, a well-known result of E. Hecke shows that $L(s, \chi)$ has an
analytic continuation to the whole complex plane and satisfies the
functional equation (see \cite[Theorem 3.8]{iwakow}). In particular, $(s-1)L(s, \chi)$ is homomorphic for $s \geq 1/2$.
As $L(s, \chi) \ll 1$ for $s \geq 3/2$, we apply the Phragmen-Lindel\"of principle (see \cite[Theorem 5.33]{iwakow}) to deduce from this and \eqref{LH} that for all $\Re(s) \geq 1/2$,
\begin{align*}
\begin{split}
  (s-1)L(s,  \chi) \ll N(q)^{\varepsilon}((1+|s|))^{1+\varepsilon}.
\end{split}
\end{align*}
  The above, together with \eqref{Lnbound10} now implies that for all $\Re(s) \geq 1/2$, we have, under the Lindel\"of hypothesis,
\begin{align}
\label{LHgen}
\begin{split}
  (s-1)L(s,  \psi) \ll N(m)^{\varepsilon}(1+|s|)^{1+\varepsilon}.
\end{split}
\end{align}

  Our discussions above hold true for other $L$-functions as well.  In particular, we note that the functional equation for Dirichlet $L$-functions over $\mq$ given in \cite[\S 9]{Da} that for any primitive Dirichlet character $\chi$ modulo $q$,
\begin{align}
\label{fneqnquad}
  L(s, \chi) = \frac {\tau(\chi)}{i^{\af}q^{1/2}}\Big( \frac {q} {\pi} \Big)^{1/2-s}\frac {\Gamma(\frac {1-s+\af}{2})}{\Gamma (\frac {s+\af}2)}L(1-s, \overline \chi),
\end{align}
  where $\af=0$ or $1$ be given by $\chi(-1)=(-1)^{\af}$. \newline

Now, Stirling's formula (\cite[(5.113)]{iwakow}) gives that, for constants $a_0$, $b_0$,
\begin{align}
\label{Stirlingratio}
  \frac {\Gamma(a_0(1-s)+ b_0)}{\Gamma (a_0s+ b_0)} \ll (1+|s|)^{a_0(1-2\Re (s))}.
\end{align}

 We deduce from \eqref{fneqnquad} and \eqref{Stirlingratio} that
\begin{align}
\label{Ldecompinv2}
  L(s,  \chi) \ll &  (q(1+|s|))^{1/2-\Re(s)+\varepsilon}|L( 1-s, \overline{\chi})|,  \quad \Re(s)<1/2.
\end{align}

 Further, the convexity bound for $L(s, \widehat \chi)$ (see \cite[Exercise 3, p. 100]{iwakow}) asserts that if $\chi$ is not principal, then
\begin{align}
\label{Lconvexbound}
\begin{split}
 L( s,  \chi) \ll
\begin{cases}
 &  \left (q(1+|s|) \right)^{(1-\Re(s))/2+\varepsilon}, \quad 0 \leq \Re(s) \leq 1, \\
 &  1, \quad \Re(s)>1.
\end{cases}
\end{split}
\end{align}

  This together with \eqref{Ldecompinv2} then implies that when $\Re(s)<1/2$
\begin{align}
\label{fneqnquad1}
  L(s, \chi) \ll (q(1+|s|))^{1/2-\Re(s)+\varepsilon}|L(1-s, \overline{\chi})|.
\end{align}

  We conclude from \eqref{Lconvexbound}-\eqref{fneqnquad1} that
\begin{align}
\label{Lchidbound}
\begin{split}
   L(s, \chi) \ll \begin{cases}
   1 \qquad & \Re(s) >1,\\
   (q(1+|s|))^{(1-\Re(s))/2+\varepsilon} \qquad & 0\leq \Re(s) <1,\\
    (q(1+|s|))^{1/2-\Re(s)+\varepsilon} \qquad & \Re(s) < 0.
\end{cases}
\end{split}
\end{align}

\subsection{Analytic behavior of Dirichlet series associated with Gauss sums}
   For $K=\mq(i)$ or $\mq(\sqrt{-3})$ and let $j in \natn$ and the $j$-th order residue symbol is defined over $K$.
For any Hecke character $\chi$ of trivial infinite type, the norm of whose modulus is bounded and that $\chi(m) = 0$ at elements $m \in
\mathcal O_K$ whenever $\chi_{j,m}$ is undefined, we let
\begin{align*}
   h_j(r,s;\chi)=\sum_{\substack{ m \in \mathcal O_K \\ m \text{ primary} \\ (m,r)=1}}\frac {\chi(m)g_{K}(r,\chi_{j,m})}{N(m)^s}.
\end{align*}

    The following lemma gives the analytic behavior of $h_j(r,s;\chi)$.
 \begin{lemma}
\label{lem1} With the notation as above and $j \geq 3$, the function $h_j(r,s;\chi)$ has meromorphic continuation to $\comc$.  It
is holomorphic in the
region $\sigma=\Re(s) > 1$ except possibly for a simple pole at $s = 1+\frac 1j$. For any $\varepsilon>0$, letting $\sigma_1 = 3/2+\varepsilon$,
then for $\sigma_1
\geq \sigma \geq \sigma_1-1/2$, we have
\begin{align}
\label{hbound}
  |((j(s-1))^2-1)h_j(r,s;\chi)| \ll N(r)^{(\sigma_1-\sigma+\varepsilon)/2}(1+|s|^2)^{ (j-1)/2 \cdot (\sigma_1-\sigma+\varepsilon)}.
\end{align}
  For $\sigma >\sigma_1$, we have
\begin{equation*}
  |h_j(r,s;\chi)| \ll 1.
\end{equation*}
\end{lemma}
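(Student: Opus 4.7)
The plan is to reduce $h_j(r,s;\chi)$ to a Dirichlet series of Gauss sums attached to $j$-th order residue symbols, apply the analytic properties of such series from the work of Patterson \cite{P}, and then interpolate the growth using the Phragmén–Lindel\"of principle.

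First, by \eqref{2.1}, $g_K(\chi_{j,m})=0$ unless $m$ is square-free, so the summation may be restricted to square-free primary $m$ with $(m,r)=1$. For such $m$, applying \eqref{eq:gmult} with $s=r$, $n=m$ gives $g_K(r,\chi_{j,m})=\overline{\chi_{j,m}(r)}\,g_K(\chi_{j,m})$, so that
\begin{equation*}
h_j(r,s;\chi)=\sum_{\substack{m\text{ primary, sq-free}\\(m,r)=1}}\frac{\chi(m)\overline{\chi_{j,m}(r)}\,g_K(\chi_{j,m})}{N(m)^s}.
\end{equation*}
Using the trivial bound $|g_K(\chi_{j,m})|\le N(m)^{1/2}$ from \eqref{2.1}, absolute convergence holds on $\sigma>3/2$, which immediately yields $|h_j(r,s;\chi)|\ll 1$ in the region $\sigma>\sigma_1$.

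Next, I would invoke Patterson's construction of the Dirichlet series of $j$-th order Gauss sums \cite{P}, which provides the meromorphic continuation of the inner Gauss-sum series to all of $\mathbb{C}$ together with a functional equation relating $s$ to $1+1/j-s$ (up to a shift). The only possible singularity in the half-plane $\sigma>1$ is the simple pole at $s=1+1/j$ coming from the residue of the associated metaplectic Eisenstein series; multiplication by the polynomial $(j(s-1))^2-1=(j(s-1)-1)(j(s-1)+1)$ cancels this pole (and the companion one at $s=1-1/j$), so $((j(s-1))^2-1)h_j(r,s;\chi)$ is holomorphic in the strip $\sigma_1-1/2\le\sigma\le\sigma_1$.

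To establish \eqref{hbound}, I would bound $((j(s-1))^2-1)h_j(r,s;\chi)$ on the two edges of this strip. On $\sigma=\sigma_1$ the bound is $\ll (1+|s|^2)$ from absolute convergence together with the polynomial factor. On $\sigma=\sigma_1-1/2$, the Patterson functional equation reflects the series to a region of absolute convergence while introducing a conductor factor of size $N(r)^{1/2}$ (coming from the twist by $\overline{\chi_{j,m}(r)}$ which acquires full conductor $N(r)$ after reflection) and a gamma-factor ratio of size $(1+|s|^2)^{(j-1)/2}$ by Stirling's formula \eqref{Stirlingratio} applied to the $j-1$ archimedean factors of the metaplectic $L$-function; this yields the bound $N(r)^{1/2+\varepsilon}(1+|s|^2)^{(j-1)/2+\varepsilon}$ on this edge. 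The Phragmén–Lindel\"of principle (\cite[Theorem 5.33]{iwakow}), applied to $((j(s-1))^2-1)h_j(r,s;\chi)$ across the strip of width $1/2$, then interpolates linearly in $\sigma$ between the two bounds, giving precisely the exponents $(\sigma_1-\sigma+\varepsilon)/2$ on $N(r)$ and $(j-1)(\sigma_1-\sigma+\varepsilon)/2$ on $(1+|s|^2)$ that appear in \eqref{hbound}.

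The main obstacle is the careful tracking of the $r$-dependence through Patterson's functional equation: the inner character $\overline{\chi_{j,m}(r)}$ must be converted, via the reciprocity laws \eqref{quartrec}, \eqref{cubicrec}, \eqref{quadreciQw}, into a Hecke character of $r$ in the reflected series so that its conductor and local factors can be read off and bounded. Once the functional equation is set up with the correct conductor $N(r)$ and the correct gamma factors of degree $j-1$, the remainder of the argument is a routine application of Phragmén–Lindel\"of.
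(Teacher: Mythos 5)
Your overall strategy --- reduce $h_j$ to Patterson's Dirichlet series of $j$-th order Gauss sums, use its meromorphic continuation and functional equation, and interpolate with Phragm\'en--Lindel\"of --- is exactly the substance behind the paper's treatment; in fact the paper gives no proof at all, but simply quotes the Lemma on p.~200 of \cite{P} and remarks that an inspection of the second display on p.~205 there extends the bound to the region needed. So you are attempting more than the authors do, and the reduction via \eqref{2.1} and \eqref{eq:gmult}, the identification of the pole at $s=1+1/j$ and its companion at $s=1-1/j$, and the $\sigma>3/2$ convergence statement are all fine.

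There is, however, a genuine quantitative gap in your interpolation step. Patterson's functional equation for $\sum_m g_K(r,\chi_{j,m})N(m)^{-s}$ is centred at $s=1$ (it relates $s$ to $2-s$), not at $1+1/j-s$; consequently the line $\sigma=\sigma_1-1/2=1+\varepsilon$ is essentially the \emph{centre} of the critical strip, and reflecting there does not land you in the region of absolute convergence. The bound $N(r)^{1/2+\varepsilon}(1+|s|^2)^{(j-1)/2+\varepsilon}$ you claim on that line is actually the bound one gets on the far edge $\sigma=\sigma_1-1\approx 1/2$, after reflecting to $\sigma\approx 3/2$ where the series converges absolutely. Moreover, even granting your two edge bounds, Phragm\'en--Lindel\"of across a strip of width $1/2$ would produce the exponents $\sigma_1-\sigma$ on $N(r)$ and $(j-1)(\sigma_1-\sigma)$ on $(1+|s|^2)$, i.e.\ the \emph{squares} of the bounds asserted in \eqref{hbound}, not ``precisely'' those exponents as you state. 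The correct argument runs the convexity principle over the full width-one strip $[\sigma_1-1,\sigma_1]$, with trivial bounds on the right edge and the functional-equation bounds $N(r)^{1/2+\varepsilon}(1+|s|^2)^{(j-1)/2+\varepsilon}$ on the left edge, which yields $N(r)^{(\sigma_1-\sigma+\varepsilon)/2}(1+|s|^2)^{(j-1)(\sigma_1-\sigma+\varepsilon)/2}$ throughout, and one then restricts to the sub-strip $\sigma_1-1/2\le\sigma\le\sigma_1$ stated in the lemma. You should also record that the auxiliary twist by the bounded-modulus Hecke character $\chi$ and the coprimality condition $(m,r)=1$ only perturb the conductor by $O(N(r)^{\varepsilon})$, which is harmless but needs saying.
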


We note that Lemma~\ref{lem1} is essentially contained in the Lemma give on page 200 of \cite{P}, except that the estimate in \eqref{hbound} is stated only for $|s-(1+1/j)|> 1/(2j)$. An inspection of the proof there  (see the second display on page 205 of \cite{P}) shows that \eqref{hbound} is in fact valid in the region of our interest.

\subsection{Some results on multivariable complex functions}
	
   We gather here some results from multivariable complex analysis. We begin with the notation of a tube domain.
\begin{defin}
		An open set $T\subset\mc^n$ is a tube if there is an open set $U\subset\mr^n$ such that $T=\{z\in\mc^n:\ \Re(z)\in U\}.$
\end{defin}
	
   For a set $U\subset\mr^n$, we define $T(U)=U+i\mr^n\subset \mc^n$.  We quote the following Bochner's Tube Theorem \cite{Boc}.
\begin{theorem}
\label{Bochner}
		Let $U\subset\mr^n$ be a connected open set and $f(z)$ a function holomorphic on $T(U)$. Then $f(z)$ has a holomorphic continuation to
the convex hull of $T(U)$.
\end{theorem}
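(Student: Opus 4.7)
The plan is to prove Bochner's theorem in three stages: a reduction to a local, essentially two-dimensional extension problem; a core contour-deformation lemma in $\mc^2$; and reassembly via connectedness and the identity theorem.

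First I would observe that $\mathrm{conv}(T(U)) = T(\mathrm{conv}(U))$, since the imaginary direction in a tube is already all of $\mr^n$ and is preserved under taking convex combinations. By Carath\'eodory's theorem, every point $x\in \mathrm{conv}(U)$ lies in the convex hull of finitely many points of $U$, and by straightforward induction on the number of vertices it suffices to extend $f$ across a single segment $[a,b]$ with $a,b\in U$. Using the connectedness of $U$, one can chain $a$ to $b$ by a path covered by a finite collection of small open balls contained in $U$, so the global question reduces to the local statement: if $B_a, B_b \subset \mr^n$ are sufficiently small open balls about neighboring points $a$ and $b$ and $f$ is holomorphic on $T(B_a \cup B_b)$, then $f$ extends holomorphically to $T(W)$ for some open neighborhood $W$ of the full segment $[a,b]$.

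For this local lemma I would restrict to the complex two-plane spanned by the real direction $b-a$ and a single transverse imaginary direction, since translation invariance in the remaining imaginary coordinates together with Hartogs's theorem on separate analyticity recovers the $n$-dimensional statement from the two-dimensional one. Inside $\mc^2$, the classical contour-shift argument (in the spirit of Bochner's original proof) represents the values of $f$ at points lying over $[a,b]$ by Cauchy-type integrals taken along contours that lie entirely within $T(B_a \cup B_b)$ but whose interiors capture the target points in the ``gap''; the resulting integral defines a holomorphic function on a strict enlargement of the initial tube, and on the overlap it must agree with $f$ by the identity theorem. Iterating this construction along a chain of balls and gluing via the identity theorem produces the desired global extension to $T(\mathrm{conv}(U))$.

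The main obstacle will be the two-dimensional extension lemma, i.e.\ showing that the holomorphy of $f$ along all complete imaginary lines over $B_a \cup B_b$ forces any hypothetical singular set to respect a convexity condition on its real profile, and thereby cannot obstruct continuation over the intervening convex region. This is a purely several-variables phenomenon with no one-variable analogue, and the technical care lies in choosing the deformation of contours so that it remains inside $T(B_a\cup B_b)$ at every stage while sweeping past the target points; once this is achieved, the rest of the argument is essentially bookkeeping with the identity theorem.
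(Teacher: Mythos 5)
First, a point of comparison: the paper offers no proof of this statement at all --- it is quoted as Bochner's Tube Theorem with a citation to \cite{Boc} --- so your proposal is not competing with an argument in the text but with the classical literature. Judged on its own terms, your outline reproduces the standard skeleton correctly: the identity $\mathrm{conv}(T(U))=T(\mathrm{conv}(U))$, the reduction via Carath\'eodory and chains of overlapping balls to extending across a single segment, and reassembly using connectedness. But none of these reductions is where the theorem lives, and the place where it does live --- your ``two-dimensional extension lemma'' --- is not proved but explicitly deferred as ``the main obstacle.'' You never specify the contours, never verify that the Cauchy-type integral you allude to converges and defines a holomorphic function on a strictly larger tube, and never carry out the check that it agrees with $f$ on the overlap. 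That single lemma is the entire analytic content of the theorem (it is, in miniature, the assertion that a tube over a non-convex base is not a domain of holomorphy), so as written the proposal is a plan for a proof rather than a proof.

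Two further points would need attention if you carried the plan out. First, ``gluing via the identity theorem'' is not automatic: $T(V)$ for connected open $V\subset\mr^n$ need not be simply connected, and the pairwise overlaps of your extension patches are tubes over intersections of real sets, which can be disconnected; the identity theorem then gives agreement only component by component, so single-valuedness of the global extension requires a separate argument (classical proofs handle this either by a continuity-in-parameter device or by exhibiting the extension as one explicit single-valued integral). Second, the reduction to $\mc^2$ via ``translation invariance plus Hartogs'' is circular as stated: the clean way to get joint holomorphy of the extension in the remaining variables is differentiation under the integral sign in the explicit Cauchy representation, which presupposes that you have written that representation down. Either supply the contour-deformation lemma in full, or switch to the shorter modern route (plurisubharmonicity of $-\log d(z,\partial\Omega)$ on a domain of holomorphy plus translation invariance of the tube forces convexity of the base), which sidesteps both issues.
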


 We denote the convex hull of an open set $T\subset\mc^n$ by $\widehat T$.  Our next result is \cite[Proposition C.5]{Cech1} and amounts to asserting that holomorphic continuations to $\widehat T$ inherit the bounds of the original functions on $T$.
\begin{prop}
\label{Extending inequalities}
		Assume that $T\subset \mc^n$ is a tube domain, $g,h:T\rightarrow \mc$ are holomorphic functions, and let $\tilde g,\tilde h$ be their
holomorphic continuations to $\widehat T$. If  $|g(z)|\leq |h(z)|$ for all $z\in T$ and $h(z)$ is nonzero in $T$, then also $|\tilde g(z)|\leq
|\tilde h(z)|$ for all $z\in \widehat T$.
\end{prop}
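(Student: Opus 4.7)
The plan is to reduce to bounding a single scalar function via Bochner's Tube Theorem. Since $h$ is nowhere vanishing on $T$, I would first set
\[
f(z) := \frac{g(z)}{h(z)}, \qquad z \in T,
\]
so that $f$ is holomorphic on $T$ and $|f(z)| \le 1$ there. By Theorem~\ref{Bochner} applied to $f$, there is a holomorphic continuation $\tilde f$ of $f$ to $\widehat T$. On $T$ we have the identity $g = f \cdot h$; since both sides admit holomorphic continuations to the connected open set $\widehat T$ that agree on the open set $T$, the identity principle in several complex variables yields $\tilde g = \tilde f \cdot \tilde h$ throughout $\widehat T$. Thus the desired inequality $|\tilde g(z)| \le |\tilde h(z)|$ will follow once we establish $|\tilde f(z)| \le 1$ on $\widehat T$.

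The step that requires actual work is the uniform bound $|\tilde f| \le 1$ on $\widehat T$; a priori Bochner's theorem only supplies holomorphic extension, not preservation of sup-norm. The trick I would employ is to transport the bound through the family of reciprocals. For any complex number $\alpha$ with $|\alpha| > 1$, the function
\[
\phi_\alpha(z) := \frac{1}{f(z) - \alpha}
\]
is holomorphic on $T$ because $|f(z)| \le 1 < |\alpha|$ forces $f - \alpha$ to be nonvanishing there. Apply Theorem~\ref{Bochner} once more to produce a holomorphic extension $\tilde\phi_\alpha$ of $\phi_\alpha$ to $\widehat T$. On $T$ we have the identity $\phi_\alpha(z) \cdot (f(z) - \alpha) = 1$, and invoking the identity principle on $\widehat T$ gives $\tilde\phi_\alpha(z)\cdot(\tilde f(z) - \alpha) = 1$ for all $z \in \widehat T$. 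In particular $\tilde f(z) - \alpha$ never vanishes on $\widehat T$. Since this holds for every $\alpha$ outside the closed unit disk, we deduce $\tilde f(\widehat T) \subset \{w \in \mc : |w| \le 1\}$, i.e.\ $|\tilde f| \le 1$ on $\widehat T$.

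Combining these pieces, on $\widehat T$ we obtain
\[
|\tilde g(z)| \;=\; |\tilde f(z)|\,|\tilde h(z)| \;\le\; |\tilde h(z)|,
\]
which is the assertion of the proposition. The main obstacle, as indicated, is the second step: bounding the extension $\tilde f$ without any geometric control over $\widehat T$ beyond its being a tube. The reciprocal-trick bypasses the need for a direct maximum principle in several variables by converting the bound $|f|\le 1$ into the assertion that certain functions are holomorphic on $T$, a property that automatically propagates via Bochner's theorem and the identity principle.
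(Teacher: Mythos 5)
Your proof is correct. The paper gives no argument of its own here --- it simply cites \cite[Proposition C.5]{Cech1} --- and your reduction to $f=g/h$ followed by the reciprocal trick with $\phi_\alpha=1/(f-\alpha)$ for $|\alpha|>1$ (using Bochner's theorem plus the identity principle on the convex, hence connected, hull $\widehat T$) is essentially the standard proof of that cited result.
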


\section{Proof of Theorem \ref{firstmoment} }
\label{sec Poisson}

\subsection{Initial Treatment}

  We define functions $A_j(s,w)$ for $j=2,3, 4$ or $6$ with $\Re(s)$, $\Re(w)$ large enough by the absolutely convergent double Dirichlet
series
\begin{align}
\label{Aswexp}
\begin{split}
A_j(s,w)=& \sum_{(q,j)=1}\ \sumstar_{\substack{\chi \bmod {q} \\ \chi^j=\chi_0}}\frac{L(w, \chi)}{q^s}= \sumprime_{n \odd }\frac{L(w,  \widehat \chi_{j,n})}{N(n)^s}.
\end{split}
\end{align}
Here $\sum'$ indicates the sum runs over square-free elements of $\mathcal O_K$ without any rational prime divisor.  The last equality above follows from Lemma \ref{lemma:quarticclass}. Here and after, we adapt the convention that when a sextic character is involved in a sum, by primary we mean $E$-primary. \newline

  We apply \eqref{Aswexp} and the Mellin inversion to get that for $\Re(s)=c$ sufficiently large,
\begin{align}
\label{MellinInversionj}
\begin{split}
\sum_{(q,j)=1}\ \sumstar_{\substack{\chi \pmod {q} \\ \chi^j=\chi_0}}L(w, \chi)\Phi \left( \frac
{q}Q \right)=& \frac1{2\pi i}\int\limits_{(c)}A_j(s,w)Q^s\widehat \Phi(s) \dif s.
\end{split}
\end{align}
Recall that the Mellin transform $\widehat{f}$ for a function $f$ is defined to be
\begin{align*} 
     \widehat{f}(s) =\int\limits^{\infty}_0f(t)t^s\frac {\dif t}{t}.
\end{align*}

The next few sections are devoted to study the analytical properties of $A_j(s,w)$.

\subsection{First region of absolute convergence of $A_j(s,w)$}
\label{Sec: first region}

Let $\mu_K$ denote the M\"obius function defined on any number field $K$ and we note that in particular $\mu_{\mq}(d)=\mu(|d|)$ for any $d\in \mz$, where $\mu$ is the usual M\"obius function. We apply the M\"obius inversion to remove the condition that $n$ has no rational prime divisor.  This leads to
\begin{align} \label{Aswexp1}
\begin{split}
A_j(s,w)=& \sum^{\infty}_{m=1}\sumprime_{\substack{0 \neq n \in \mathcal O_K \\ n \odd}}\frac{\leg
{m}n_j}{N(n)^sm^w} = \sum^{\infty}_{m=1} \frac 1{m^w}\sum_{\substack{d \in \mz \\ d \odd }} \frac {\mu_{\mq}(d)\chi^{(m)}_j(d)}{d^{2s}}\sum_{\substack{0 \neq n \in \mathcal O_K \\ n \odd \\
dn \text{ square-free} \ \in \mathcal O_K}}\frac{\chi^{(m)}_j(n)}{N(n)^s}.
\end{split}
\end{align}

As $d$ is square-free in $\mz$, it is also square-free as an element of $\mathcal O_K$ so that the condition that $dn$ is square-free is equivalent to $n$ square-free in $\mathcal O_K$ and $(d,n) = 1$.  Thus we obtain that the inner-most sum over $n$ in \eqref{Aswexp1} is
\begin{align}
\label{sumovern}
\begin{split}
\sum_{\substack{0 \neq n \in \mathcal O_K \\ n \odd, (n,d)=1\\
n \text{ square-free} \ \in \mathcal O_K}}\frac{\chi^{(m)}_j(n)}{N(n)^s}=\prod_{(\varpi, jd)=1}\Big(1+\frac{\chi^{(m)}_j(\varpi)}{N(\varpi)^s}\Big )=\frac {L(s, \chi^{(m)}_j)}{L(2s, \chi^{(m)}_j)}\prod_{\varpi| jd}\Big(1+\frac{\chi^{(m)}_j(\varpi)}{N(\varpi)^s}\Big )^{-1}.
\end{split}
\end{align}

  Note that it follows from \eqref{chivalue} that we have $\chi^{(m)}_j(d)=1$ for $d \in \mz, (d,m)=1$. This implies that
\begin{align}
\label{sumd}
\begin{split}
\sum_{\substack{d \in \mz \\ d \odd }} \frac {\mu_{\mq}(d)\chi^{(m)}_j(d)}{d^{2s}}\prod_{\varpi| d}\Big(1+\frac{\chi^{(m)}_j(\varpi)}{N(\varpi)^s}\Big )^{-1} & =\sum_{\substack{d \in \mz \\ d \odd \\ (d,m)=1 }} \frac {\mu_{\mq}(d)}{d^{2s}}\prod_{\varpi| d}\Big(1+\frac{\chi^{(m)}_j(\varpi)}{N(\varpi)^s}\Big )^{-1} \\
& =:  P(s, \chi^{(m)}_j)\prod_{p| \frac {m}{(m,j)}}\Big(1-\frac {1}{p^{2s}}\prod_{\varpi| p}\Big(1+\frac{\chi^{(m)}_j(\varpi)}{N(\varpi)^s}\Big )^{-1}\Big )^{-1},
\end{split}
\end{align}
  where for any Hecke character $\chi$ of trivial infinite type, we define
\begin{align}
\label{Pdef}
\begin{split}
 P(s, \chi)=\prod_{(p,j)=1}\Big(1-\frac {1}{p^{2s}}\prod_{\varpi| p}\Big(1+\frac{\chi(\varpi)}{N(\varpi)^s}\Big )^{-1}\Big ).
\end{split}
\end{align}

   It is easy to see that for $\Re(s)>0$,
\begin{align*}
\begin{split}
 P(s, \chi^{(m)}_j)=\prod_{p}\Big(1-\frac {1}{p^{2s}}+O \Big( \frac 1{p^{3s}} \Big)\Big )=\zeta(2s)^{-1}\prod_{p}(1+O \Big(\frac 1{p^{3s}} \Big)\Big ).
\end{split}
\end{align*}

  It follows that $P(s, \chi^{(m)}_j)$ converges absolutely for $\Re(s)>1/2$. Moreover, we deduce from \eqref{Aswexp1}--\eqref{sumd} that if $\Re(s)>1/2$,
\begin{align}
\label{Ajswexp2}
\begin{split}
 A_j(s,w)=& \sum^{\infty}_{m=1}  \frac{P(s, \chi^{(m)}_j)}{m^w}\frac {L(s, \chi^{(m)}_j)}{L(2s, \chi^{(m)}_j)}\prod_{\varpi| j}\Big(1+\frac{\chi^{(m)}_j(\varpi)}{N(\varpi)^s}\Big )^{-1}\prod_{p|\frac {m}{(m,j)}}\Big( 1-\frac {1}{p^{2s}}\prod_{\varpi| p}\Big(1+\frac{\chi^{(m)}_j(\varpi)}{N(\varpi)^s}\Big )^{-1} \Big)^{-1}.
\end{split}
\end{align}

Furthermore, from \eqref{Ajswexp2} and bounds similar to that given in \eqref{Lnbound}, for $\Re(s)>1/2$, we have
\begin{align}
\label{Aswboundsumm}
\begin{split}
 (s-1)A_j(s,w) \ll & \sum_{\substack{m}}\frac{|(s-1)L(s, \chi^{(m)}_j)|}{m^{\Re (w)-\varepsilon}},
\end{split}
\end{align}
for any $\varepsilon >0$. \newline

 Now, \eqref{LHgen}, together with the Lindel\"of hypothesis, we get that, save for a simple pole at $s=1$, the sum on the right-hand side of \eqref{Aswboundsumm} converges in the region
\begin{equation*}
		S_{1}=\{(s,w): \ \Re(s) > 1/2, \ \Re(w)>1 \}.
\end{equation*}

   In what follows, we shall define similar regions $S_{i, j}$, $S_j$ and $R$. For these regions, we use the convention that for any real number $\delta$,
\begin{align*}
\begin{split}
 S_{i, j,\delta}=: \{ (s,w)+\delta (1,1) : (s,w) \in S_{i,j} \}, \quad S_{j,\delta}=: \{ (s,w)+\delta (1,1) : (s,w) \in S_j \}, \quad R_{\delta}=: \{ (s,w)+\delta (1,1) : (s,w) \in R \}.
\end{split}
\end{align*}

Using this notation, \eqref{LHgen} renders that in $S_{1, \varepsilon}$,
\begin{align}
\label{Aswboundwlarge}
\begin{split}
 |(s-1)A_j(s,w)| \ll & (1+|s|)^{1+\varepsilon}.
\end{split}
\end{align}

\subsection{Second region of absolute convergence of $A_j(s,w)$}

  We apply the convexity bound \eqref{Lchidbound} to $L(s, \chi)$ in $A_j(s,w)$ given by the first equality of \eqref{Aswexp}. We infer that $A_j(s,w)$ is holomorphic in the region
\begin{equation*}
		S_{2,1}=\{(s,w): \ \Re(s)>1,  \ \Re(s+ w/2)> 3/2, \ \Re(w) \geq 1/2 \}.
\end{equation*}

  Moreover, in the region $S_{2,1, \varepsilon}$, we have
\begin{align}
\label{Aswboundslarge}
\begin{split}
 |A_j(s,w)| \ll &  (1+|w|)^{\max \{(1-\Re(w))/2, 0\}+\varepsilon}.
\end{split}
\end{align}

   Next, applying \eqref{Ldecompinv2}, as well as the remark in the paragraph below, to $L(w, \chi)$ enables us to deduce from \eqref{Aswexp} that for $\Re(w) < 1/2$,
\begin{align}
\label{Aswzexpdbound}
 A_j(s,w) \ll  &  (1+|w|)^{1/2-\Re(w)}\sum_{(q,j)=1}\ \sumstar_{\substack{\chi \bmod {q} \\ \chi^j=\chi_0}}\frac{|L(1-w, \chi)|}{q^{\Re(s+w)-1/2}}.
\end{align}

   Again by \eqref{Lchidbound}, the sums on the right-hand side of \eqref{Aswzexpdbound} converge in
\begin{equation*}
		S_{2,2}=\{(s,w): \ \Re(s+w)>  3/2, \ \Re(s+w/2)> 3/2, \ \Re(w) \leq 1/2 \},
\end{equation*}
and in $S_{2,2, \varepsilon}$,
\begin{align}
\label{Aswboundslarge1}
\begin{split}
 |A_j(s,w)| \ll & (1+|w|)^{1/2-\Re(w)+\max \{\Re(w)/2, 0\}+\varepsilon}.
\end{split}
\end{align}

  Now, set
\begin{equation*}
		S_2 =: S_{2,1} \cup S_{2,2} =\{(s,w):\ \Re(s+w)> 3/2, \ \Re(s+w/2)> 3/2 \}.
\end{equation*}
   It follows that $A_j(s,w)$ converges absolutely in $S_2$. From \eqref{Aswboundslarge} and \eqref{Aswboundslarge1}, we deduce that in $S_{2, \varepsilon}$,
\begin{align} \label{Aswbound2}
\begin{split}
 |A_j(s,w)| \ll & (1+|w|)^{\max \{1/2-\Re(w), (1-\Re(w))/2, 0 \}+\varepsilon}.
\end{split}
\end{align}

  Lastly, the functional equation \eqref{fneqnquad} gives that for $\Re(w) < 1/2$,
\begin{align*}
 \sum_{(q,j)=1}\ \sumstar_{\substack{\chi \bmod {q} \\ \chi^j=\chi_0}}\frac{L(w, \chi)}{q^s}= \sum_{(q,j)=1}\ \sumstar_{\substack{\chi \bmod {q} \\ \chi^j=\chi_0}} i^{-\af}\pi^{w-1/2}\frac {\Gamma(\frac {1-w+\af}{2})}{\Gamma (\frac {w+\af}2)}\frac{\tau(\chi) L(1-w, \overline \chi)}{q^{s+w}} .
\end{align*}
  We now identify any $\chi$ modulo $q$ in the above expression with $\widehat \chi_{j, n}$ to deduce from Lemma \ref{quarticGausssum1} that
\begin{equation*}
\tau(\widehat\chi_{j,n})= \begin{cases} \overline{\leg {c_K}{n}}_jg_K(\chi_{j,n}) & j=3, 6, \\  i^{(1-\chi_{4,n}(-1))/2}\overline{\leg {c_K}{n}}_jg_K(\chi_{j,n}) &  j=4, \end{cases} \quad \mbox{where} \quad  C_K = \begin{cases}
      \sqrt{D_K}, \qquad & j=3, \\
     (2i)^3, \qquad & j=4, \\
      -D^2_K, \qquad & j=6.
    \end{cases}
\end{equation*}

  We further define for $a=\pm 1$,
\begin{align*}
\begin{split}
 C_{j,a}(s, w)= \sum_{m=1}^\infty\frac{1}{m^{w}} \sumprime_{n \odd }\frac{\overline{\leg{a m c_K}{n}_j} g_K(\chi_{j,n})}{N(n)^{s}}.
\end{split}
\end{align*}

  We then use
\begin{align*}
\begin{split}
 \frac 12(\chi_{j,n}(1)\pm \chi_{j,n}(-1))
\end{split}
\end{align*}
  to detect the condition $\af =\pm 1$.  This enables us to deduce from the above and \eqref{fneqnquad} that
\begin{align}
\label{AswexpC}
\begin{split}
A_j(s,w)= \frac 12\frac{\pi^{w-1/2}\Gamma\bfrac{1-w}{2}}{\Gamma\bfrac {w}2} & \Big (C_{j,1}(s+w, 1-w)+C_{j,-1}(s+w, 1-w)\Big ) \\
&+\frac {i^{-a_j}}2\frac{\pi^{w-1/2}\Gamma\bfrac{2-w}{2}}{\Gamma\bfrac {1+w}2}\Big (C_{j,1}(s+w, 1-w)-C_{j,-1}(s+w, 1-w)\Big ),
\end{split}
\end{align}
  where $a_j=0$ if $j=3, 6$ and $a_j=1$ for $j=4$. \newline

   We apply the M\"obius inversion to remove the condition that $n$ has no rational prime divisor and simplify the resulting expression using the identity \eqref{grel} and \eqref{2.1}.  We get that for $a=\pm 1$,
\begin{align*}
\begin{split}
C_{j,a}(s, w)= \sum_{m=1}^\infty\frac{1}{m^{w}} \sum_{\substack{d \in \mz \\ d \odd }} \frac {\mu_{\mq}(d)g_K( \chi_{j,d})\overline{\leg{a m C_K}{d}_j}}{d^{2s}}H_{j,a}(s, md^{j-2}C_K),
\end{split}
\end{align*}
  where any $l \in \mathcal O_K$,
\begin{align*}
\begin{split}
 H_{j,a}(s, l)= \sum_{n \odd }\frac{\overline{\leg{al}{n}_j} g_K(\chi_{j,n})}{N(n)^{s}}= \sum_{\substack{n \odd \\ (n, l)=1} }\frac{\overline{\leg{al}{n}_j} g_K(\chi_{j,n})}{N(n)^{s}}=\sum_{\substack{n \odd \\ (n, al)=1} }\frac{ g_K(al, \chi_{j,n})}{N(n)^{s}},
\end{split}
\end{align*}
and the last equality above follows from \eqref{eq:gmult}. \newline

 We bound $g_K(\chi_{j,d})$ trivially by $\sqrt{N(d)}=d$ and infer from Lemma \ref{lem1} that except for a simple pole at $s = 1/j$, $C_{j,a}(s,w)$ is holomorphic in the region
\begin{align}
\label{Cj1region0}
\begin{split}
	R =: &	\{(s,w):\ \Re(w)>1, \ \Re(w+ s)> 5/2, \ \Re(s)>1, \ \Re(2s+ (j-2)s)>2+ 3(j-2)/2  \} \\
=& \{(s,w):\ \Re(w)>1, \ \Re(w+ s)> 5/2,  \ \Re(s)> 3/2- 1/j \}.
\end{split}
\end{align}

  Moreover, in $R_{\delta}$ we have
\begin{align}
\label{Cj1bound}
		(s-1-1/j)C_{j,a}(s,w) \ll (1+|s|^2)^{ \max \{ (j-1)/2 \cdot (3/2-\Re(w)), 0\}+\varepsilon)}.
\end{align}

  It follows from \eqref{AswexpC} and \eqref{Cj1region0} that the function $(s+w-1-1/j)A_j(s,w)$ can be extended to
the region
\begin{align*}
		S_{3,j}=& \{(s,w): \ \Re(1-w)>1, \ \Re(s)> 3/2, \ \Re(s+w)>3/2-1/j \}.
\end{align*}

   Also, by \eqref{Stirlingratio}, \eqref{AswexpC} and \eqref{Cj1bound}, we have, in $S_{3,j, \delta}$,
\begin{align} \label{Aj1bound}
\begin{split}
		(s+w-1-1/j)A_{j}(s,w) \ll &  (1+|w|)^{1/2-\Re(w)}(1+|s+w|^2)^{ \max \{ (j-1)/2 \cdot(1/2+\Re(w)), 0\}+\varepsilon)} \\
\ll & (1+|w|)^{1/2-\Re(w)+\max \{(j-1)(1/2+\Re(w)), 0\}+\varepsilon)}(1+|s|)^{ \max \{ (j-1)(1/2+\Re(w)), 0\}+\varepsilon)}.
\end{split}
\end{align}

  One checks directly that the union of $S_1, S_2$ and $S_{3,j}$ is connected and the points $(1/2, 1)$, $(3/2, -1/j)$ are on the boundary of this union.  The convex hull of $S_1, S_2$ and $S_{3,j}$ equals
\begin{equation*}
		S_{4,j}=\{(s,w): \Re(s)>1/2, \ \Re((1+1/j)s+w)> 3/2+ 1/(2j), \ \Re(s+w)> 3/2- 1/j \}.
\end{equation*}
 We then conclude from Theorem \ref{Bochner} that $(s-1)(s+w-1-1/j)A_j(s,w)$ converges absolutely in the region $S_{4,j}$. Moreover, we deduce from  \eqref{Aswboundwlarge}, \eqref{Aswbound2}, \eqref{Aj1bound} and Proposition \ref{Extending inequalities} that in the region
\begin{equation*}
		S_{4,j} \bigcap \{(s,w): \Re(s)>1/2, \ -1/2< \Re(w)< 5/2 \},
\end{equation*}
  we have
\begin{align}
\label{Aj}
\begin{split}
		(s-1)(s+w-1-1/j)A_{j}(s,w) \ll &  (1+|s|)^{ 3(j-1)+1+\varepsilon}(1+|w|)^{ 3(j-1)+\varepsilon}.
\end{split}
\end{align}

\subsection{Residue of $A_j(s,w)$ at $s=1$}
\label{sec:resA}
	
	We see from \eqref{Ajswexp2}that $A_j(s,w)$ has a pole at $s=1$ arising from the terms with $m= \text{$j$-th power}$.  Let $r_K$ denote the residue of $\zeta_K(s)$ at $s=1$.  We compute directly from \eqref{Aswexp1} and \eqref{sumovern}, upon making a change of variable $m \mapsto m^j$, that
\begin{align*}
 \res_{s=1}A_j(s,w)= \frac {r_K}{\zeta_{K}(2)}\sum^{\infty}_{m=1} \frac 1{m^{jw}}\sum_{\substack{d \in \mz \\ d \odd, (d,m)=1 }} \frac {\mu_{\mq}(d)}{d^{2}}\prod_{\varpi| mjd}\Big(1+\frac{1}{N(\varpi)}\Big )^{-1}  = \frac {r_K}{\zeta_{K}(2)}P(1, \psi_0)Z_j(w),
\end{align*}
 where $\psi_0$ is the principal Hecke character of trivial infinite type modulo $1$, $P$ is defined in \eqref{Pdef} and
\begin{align*}
\begin{split}
 Z_j(w)=\sum^{\infty}_{m=1} \frac 1{m^{jw}}\prod_{\varpi| jm}\Big(1+\frac{1}{N(\varpi)}\Big )^{-1}\prod_{p|\frac {m}{(m,j)}}\Big(1-\frac {1}{p^{2}}\prod_{\varpi| p}\Big(1+\frac{1}{N(\varpi)}\Big )^{-1}\Big )^{-1}.
\end{split}
\end{align*}
  We note here that it is easy to see that $Z(u)$ is holomorphic and bounded for $\Re(u) \geq 1 +\delta>1$. \newline

  We now define constants $C_j$ for $j=3$, $4$ or $6$ by
\begin{align}
\label{eq:c}
\begin{split}
 C_j=\frac {r_K}{\zeta_{K}(2)}P(1, \psi_0)Z_j(\tfrac{1}{2}+\alpha).
\end{split}
\end{align}

  It follows that
\begin{align}
\label{Residue at s=1}
 \res_{s=1}& A_j(s, \tfrac{1}{2}+\alpha) = C_j.
\end{align}

\subsection{Completion of the Proof}

We now evaluate the integrals in \eqref{MellinInversionj} by shifting the line of integration there to $\Re(s)=(2j+1-2j\alpha)/(2j+2)+\varepsilon$ for all $j$ under our consideration.  Note that integration by parts yiedls that for any rational integer $E \geq 0$,
\begin{align}
\label{whatbound}
 \widehat \Phi(s)  \ll  \frac{1}{(1+|s|)^{E}}.
\end{align}

  The integral on the new line can be absorbed into the $O$-term in \eqref{eq:1} upon using \eqref{Aj} and \eqref{whatbound}.  We also encounter two simple poles at $s=1$ and $s=1/2+1/j-\alpha$ in the shift. The residue at $s=1$ is given in \eqref{Residue at s=1} and direct computation then leads to the main term given in \eqref{eq:1}. The residue at $s=1/2+1/j-\alpha$ leads to an error term of size $O(Q^{1/2+1/j-\alpha})$ by \eqref{Aj} and \eqref{whatbound} again. As this can also be absorbed into the $O$-term in \eqref{eq:1}, this completes the proof of Theorem \ref{firstmoment}.

\vspace*{.5cm}

\noindent{\bf Acknowledgments.}   P. G. is supported in part by NSFC Grant 11871082 and L. Z. by the Faculty Research Grant PS43707 at the University of New South Wales.

\bibliography{biblio}
\bibliographystyle{amsxport}

\end{document}